\newcommand{\J}{{\mathrel{\mathscr J}}} 
\newcommand{\R}{{\mathrel{\mathscr R}}} 
\newcommand{\eL}{{\mathrel{\mathscr L}}} 
\newcommand{\inv}{^{-1}}
\newcommand{\ov}[1]{\ensuremath{\overline {#1}}}
\newtheorem{Thm}{Theorem}
\newtheorem{Prop}[Thm]{Proposition}
\newtheorem{Lemma}[Thm]{Lemma}
{\theoremstyle{definition}
}
{\theoremstyle{remark}
}
\newtheorem{Cor}[Thm]{Corollary}
{\theoremstyle{remark}
}
{\theoremstyle{remark}
}
\theoremstyle{remark}
\theoremstyle{remark}
\theoremstyle{remark}
\title{Linear conjugacy}
\author{Benjamin Steinberg}
\address{%
    Department of Mathematics\\
    City College of New York\\
    Convent Avenue at 138th Street\\
    New York, New York 10031\\
    USA}
\email{bsteinberg@ccny.cuny.edu}
\thanks{This work was partially supported by a grant from the Simons Foundation(\#245268
to Benjamin Steinberg), the Binational Science Foundation of Israel and the US (\#2012080 to Benjamin Steinberg), by a CUNY Collaborative Incentive Research Grant, by  NSA MSP \#H98230-16-1-0047 and by a Fulbright Scholar award.}
\date{\today}
\keywords{monoids, representation theory, conjugacy}
\subjclass[2010]{20M30}
\begin{document}

\begin{abstract}
We say that two elements of a group or semigroup are $\Bbbk$-linear conjugates if their images under any linear representation over $\Bbbk$ are conjugate matrices.  In this paper we characterize $\Bbbk$-linear conjugacy for finite semigroups (and, in particular, for finite groups) over an arbitrary field $\Bbbk$.
\end{abstract}

\maketitle

\section{Introduction}
This article is motivated by a mathoverflow question asked by James Propp~\cite{Propp}.  A well-known lemma of Brauer~\cite{Kovacs.perm} asserts that two permutation matrices are similar if and only if the corresponding permutations are conjugate and the question was whether the same is true for matrices corresponding to functions.  The answer for functions is more complicated.

Let $S$ be a semigroup and $\Bbbk$ a field.   We say that $s,t\in S$ are \emph{$\Bbbk$-linear conjugates} if, for every linear representation $\rho\colon S\to M_n(\Bbbk)$, there is an invertible matrix $A\in GL_n(\Bbbk)$ such that $A\rho(s)A^{-1}=\rho(t)$.  This is clearly an equivalence relation on $S$.  Also note that if $s$ and $t$ are $\Bbbk$-linear conjugates, then so are $s^k$ and $t^k$ for all $k\geq 1$.  When $\Bbbk$ is the field of complex numbers, then we just say that $s,t$ are \emph{linear conjugates}.  Observe that if $\Bbbk$ is a subfield of $\mathbb F$, then $\mathbb F$-linear conjugates are also $\Bbbk$-linear conjugates.  This is a consequence of the fact that the rational canonical form of a matrix does not change when you extend the scalars.


For finite groups, linear conjugacy reduces to conjugacy.  Indeed, conjugate elements of any group are $\Bbbk$-linear
 conjugates over any field $\Bbbk$.  If $G$ is a finite group and $g,h\in G$ are linear conjugates, then every complex character of $G$ coincides on $g$ and $h$.  As the irreducible characters of $G$ form a basis for the space of functions constant on conjugacy classes, we deduce that $g,h$ are conjugate in $G$. For finite semigroups, the situation is a bit more complex, as we shall see. Nonetheless, there is a syntactic description of linear conjugacy for finite semigroups that seems to be interesting in its own right.  We give, in fact, a characterization of $\Bbbk$-linear conjugacy for finite semigroups over any field $\Bbbk$.

\section{Linear conjugacy for finite semigroups}
From now on, all semigroups are assumed finite.  A reference for semigroup representation theory is~\cite{repbook}.
Fix a semigroup $S$.  As usual, we shall denote by $s^{\omega}$ the idempotent power of $s\in S$ and put $s^{\omega+j}=s^js^{\omega}$ for $j\geq 1$; note that $(s^{\omega+1})^j=s^{\omega+j}$ for $j\geq 1$. If $|S|=m$, then $s^{\omega}=s^{m!}$.   Recall that $s,t\in S$ are \emph{generalized conjugates} if there exist $x,x'\in S$ such that $xx'x=x$, $x'xx'=x'$, $x'x=s^{\omega}$, $xx'=t^{\omega}$ and $xs^{\omega+1}x'=t^{\omega+1}$.  Note that this implies that $x't^{\omega+1}x=s^{\omega+1}$ and, in fact, generalized conjugacy is an equivalence relation.  It was proved independently by McAlister~\cite{McAlisterCharacter} and by Rhodes and Zalcstein~\cite{RhodesZalc} that $s,t$ are generalized conjugates if and only if $\chi(s)=\chi(t)$ for all complex characters $\chi$ of $S$.

Two elements $s,t\in S$ are $\J$-equivalent, written $s\J t$, if they generate the same principal two-sided ideal.  Similarly, they are $\mathscr L$-equivalent, written $s\eL t$, if the generate the same principal left ideal and they are $\mathscr R$-equivalent, written $s\R t$, if they generate the same principal right ideal.  We write $J_s$, $L_s$ and $R_s$ for the respective $\mathscr J$-, $\mathscr L$- and $\mathscr R$-classes of $s$.  In a finite semigroup, $sS^1\cap J_s=R_s$ and $S^1s\cap J_s=L_s$ (where $S^1$ is the result of adjoining an identity to $S$).

 Notice that if $|S|=n$, then $s^n\J s^k$ for all $k\geq n$.  Another classical fact that we shall need is that if $e,f\in S$ are idempotents and $x\eL e$ and $x\R f$, then there exists $x'\in S$ with $x'\R e$ and $x'\eL f$ such that $xx'x=x$, $x'xx'=x'$, $xx'=f$ and $x'x=e$.
 The \emph{maximal subgroup} $G_e$ at an idempotent $e\in S$ is the group of units of the monoid $eSe$ (with  identity $e$). In a finite semigroup $S$, one has that $G_e=eS\cap Se\cap J_e=eSe\cap J_e$ for an idempotent $e$.    The group $G_e$ acts freely on the right of $L_e$ by multiplication.  If $f$ is an idempotent in $J_e$, one has that $fS\cap L_e= R_f\cap L_e\neq \emptyset$.  This uses the stability of finite semigroups.
  See~\cite[Appendix~A]{qtheor} or~\cite{CP} for the necessary details on the algebraic theory of finite semigroups.

 The main goal of this article is to provide a syntactic description of $\Bbbk$-linear conjugacy for any field $\Bbbk$.   For example, linear conjugacy has the following syntactic formulation, to be proved shortly.

\begin{Thm}\label{t:main.thm}
Let $S$ be a finite semigroup.  Then $s,t\in S$ are linear conjugates if and only if
\begin{enumerate}
  \item $s^k\J t^k$ for all $k\geq 1$ (or, equivalently, for all $1\leq k\leq |S|$);
  \item $s$ and $t$ are generalized conjugates.
\end{enumerate}
\end{Thm}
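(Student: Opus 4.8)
The plan is to test similarity of the matrices $\rho(s)$ and $\rho(t)$, simultaneously over all complex representations $\rho$, by isolating two numerical invariants: traces of powers and ranks of powers. The structural input is that for any $\rho\colon S\to M_n(\mathbb C)$ the idempotent $\rho(s^\omega)$ splits $\mathbb C^n=\operatorname{Im}\rho(s^\omega)\oplus\ker\rho(s^\omega)$ into $\rho(s)$-invariant subspaces: on the first, $\rho(s)$ equals $\rho(s^{\omega+1})$ and so is a finite-order, hence diagonalisable, operator with root-of-unity eigenvalues; on the second, $\rho(s)^\omega=0$, so $\rho(s)$ is nilpotent. Thus $\rho(s)$ has nontrivial Jordan blocks only at the eigenvalue $0$. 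Consequently, for a fixed $\rho$, the matrices $\rho(s)$ and $\rho(t)$ are similar if and only if (i) $\operatorname{tr}\rho(s^j)=\operatorname{tr}\rho(t^j)$ for all $j\ge 1$ and (ii) $\rk\rho(s^k)=\rk\rho(t^k)$ for all $k\ge 1$: by Newton's identities (i) fixes the multiset of nonzero eigenvalues and, since the ambient dimension is the same for both, also the multiplicity of $0$ and hence the full characteristic polynomial, while (ii) then fixes the sizes of the nilpotent Jordan blocks. Quantifying over all $\rho$, linear conjugacy of $s$ and $t$ is equivalent to the conjunction of \textbf{(A)}: $\operatorname{tr}\rho(s^j)=\operatorname{tr}\rho(t^j)$ for every $\rho$ and every $j$, and \textbf{(B)}: $\rk\rho(s^k)=\rk\rho(t^k)$ for every $\rho$ and every $k$.

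To identify (A) with hypothesis (2), I would read $\operatorname{tr}\rho(s^j)=\chi_\rho(s^j)$ as a character value, so that (A) asserts $\chi(s^j)=\chi(t^j)$ for all complex characters $\chi$ and all $j$; by the McAlister--Rhodes--Zalcstein theorem this says that $s^j$ and $t^j$ are generalized conjugates for every $j\ge 1$. The key simplification is that generalized conjugacy already passes to powers: if $x,x'$ witness that $s$ and $t$ are generalized conjugates, then the same pair witnesses it for $s^j,t^j$, since $(s^j)^\omega=s^\omega$, $(s^j)^{\omega+1}=(s^{\omega+1})^j$, and, using $x'xs^{\omega+1}=s^{\omega+1}$, an easy induction gives $x(s^{\omega+1})^jx'=(xs^{\omega+1}x')^j=(t^{\omega+1})^j$. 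Hence (A) collapses to the single condition (2).

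The remaining ingredient is the lemma that, for $a,b\in S$, one has $\rk\rho(a)=\rk\rho(b)$ for every representation $\rho$ if and only if $a\J b$; applying it to $a=s^k$, $b=t^k$ turns (B) into condition (1), where only $1\le k\le|S|$ matter because $s^k\J s^{|S|}$ for $k\ge|S|$. One direction is immediate: $a\J b$ yields $a=pbq$ and $b=raw$ with $p,q,r,w\in S^1$, whence $\rk\rho(a)\le\rk\rho(b)\le\rk\rho(a)$. For the converse I would exhibit, for each principal ideal $S^1aS^1$, the Rees-quotient representation $\rho_a$ of $S$ by left multiplication on $\mathbb C[S^1\setminus S^1aS^1]$, sending a basis vector to $0$ whenever the product lands in the ideal. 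A direct count gives $\rk\rho_a(c)=|cS^1\setminus S^1aS^1|$, which vanishes exactly when $c\in S^1aS^1$; so equality of all ranks forces $b\in S^1aS^1$ and, symmetrically, $a\in S^1bS^1$, i.e. $a\J b$.

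I expect the main obstacle to be this converse direction of the lemma, since it is the one place where a representation must be constructed rather than manipulated formally; the Rees-quotient modules and the rank computation above are designed precisely to detect the two-sided ideal order by rank. The conceptual heart of the argument, however, is the opening structural observation that $\rho(s)$ is diagonalisable away from $0$, which is what lets the rather rigid condition ``$\rho(s)$ similar to $\rho(t)$ for all $\rho$'' factor cleanly into the semisimple datum (A) and the nilpotent datum (B), matching the two hypotheses of the theorem.
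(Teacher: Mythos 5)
Your proof is correct, and while it shares the paper's basic architecture (split $\rho(s)$ via the idempotent $\rho(s^\omega)$ into an invertible part and a nilpotent part, and let ranks of powers govern the nilpotent part), it certifies similarity of the invertible parts by a genuinely different mechanism. The paper proves the harder Theorem~\ref{t:main.thm2} and deduces Theorem~\ref{t:main.thm} as the case $\Bbbk=\mathbb C$: for sufficiency it builds an explicit intertwiner $F'(v)=xv$ from the generalized-conjugacy witnesses $x,x'$ and then invokes Corollary~\ref{c:kovacs}, and for necessity of condition (2) over a general field it needs the permutation module $\Bbbk[L_e/C]$ and Brauer's lemma (which, as the paper notes, is superfluous in characteristic zero since $\Bbbk$-character equivalence then implies $\mathbb Q$-character equivalence). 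You instead replace the intertwiner by purely numerical invariants: since $\rho(s)$ is diagonalisable away from $0$, traces of powers determine the characteristic polynomial via Newton's identities and hence the semisimple part up to similarity, while ranks of powers determine the nilpotent Jordan structure; the reduction of ``all traces of all powers'' to a single generalized conjugacy via $(s^j)^\omega=s^\omega$ and $x(s^{\omega+1})^jx'=(xs^{\omega+1}x')^j$ is correct, as is your rank lemma detecting the $\mathscr J$-order with the contracted modules $\mathbb C[S^1\setminus S^1aS^1]$ (the same modules the paper uses for necessity of (1)). What your route buys is a short, self-contained proof of Theorem~\ref{t:main.thm} that never constructs a conjugating map and never touches Brauer's lemma; what it gives up is any hope of generalizing to Theorem~\ref{t:main.thm2}, since in positive characteristic Newton's identities fail, finite-order operators need not be diagonalisable, and character values no longer pin down the semisimple part --- which is exactly why the paper works with explicit intertwiners and the Galois action instead.
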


Let $\Bbbk$ be a field. Then  $s,t\in S$ are said to be \emph{$\Bbbk$-character equivalent} if $\chi(s)=\chi(t)$ for each character $\chi$ of $S$ over $\Bbbk$.  Recall that the \emph{character} of a representation $\rho\colon S\to M_n(\Bbbk)$ is the mapping $\chi\colon S\to \Bbbk$ sending $s$ to the trace of $\rho(s)$.   For example, $s$ and $t$ are $\mathbb C$-character equivalent if and only if they are generalized conjugates.  Character equivalence over an arbitrary field was described in~\cite{ourcharacter}.  Let us now formulate our main result, which can then be made  explicit using the results of~\cite{ourcharacter}.  Notice that if $\Bbbk$ is a subfield of $\mathbb F$, then $\mathbb F$-character equivalence implies $\Bbbk$-character equivalence because every matrix representation over $\Bbbk$ is a representation over $\mathbb F$.

\begin{Thm}\label{t:main.thm2}
Let $S$ be a finite semigroup and $\Bbbk$ a field.  Then $s,t\in S$ are linear conjugates if and only if
\begin{enumerate}
  \item $s^k\J t^k$ for all $k\geq 1$ (or, equivalently, for all $1\leq k\leq |S|$);
  \item $s$ and $t$ are $\mathbb Q$-character equivalent;
  \item $s$ and $t$ are $\Bbbk$-character equivalent.
\end{enumerate}
\end{Thm}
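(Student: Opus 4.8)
The plan is to translate $\Bbbk$-linear conjugacy into invariants of the rational canonical form and then read those invariants off the semigroup structure. Fix a representation $\rho\colon S\to M_n(\Bbbk)$. Since $s^{\omega+d}=s^{\omega}$ for a suitable period $d$, the matrix $A=\rho(s)$ satisfies $A^{N}(A^{d}-I)=0$ with $N=|S|$, so its eigenvalues over $\overline{\Bbbk}$ are $0$ together with $d$-th roots of unity. By the Fitting decomposition the space splits as $\ker A^{N}\oplus\operatorname{im}A^{N}$, where $\rho(s)$ is nilpotent on the first summand and acts as the finite-order matrix $\rho(s^{\omega+1})$ on $V_1=\operatorname{im}\rho(s^{\omega})$. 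Two such matrices $\rho(s),\rho(t)$ are similar over $\Bbbk$ exactly when (a) their nilpotent parts have the same Jordan type, equivalently $\operatorname{rank}\rho(s^{j})=\operatorname{rank}\rho(t^{j})$ for all $j$, and (b) the invertible parts $\rho(s^{\omega+1})|_{V_1}$ and $\rho(t^{\omega+1})|_{V_1}$ are similar. I would organise both the necessity and the sufficiency proof around this dichotomy, recovering Theorem~\ref{t:main.thm} as the special case $\Bbbk=\mathbb C$.

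For part (a) I would prove a rank lemma: for $u,v\in S$ one has $\operatorname{rank}\rho(u)=\operatorname{rank}\rho(v)$ for every $\Bbbk$-representation $\rho$ if and only if $u\J v$. One inequality is monotonicity of rank under the $\J$-order, since $u\in S^1vS^1$ forces $\operatorname{rank}\rho(u)\le\operatorname{rank}\rho(v)$; the converse uses the Clifford--Munn--Ponizovskii parametrisation of irreducibles by apexes to produce, for each regular $\J$-class, a representation whose rank function separates it. Applied to the powers $s^{j},t^{j}$ this shows that (a) holds for all $\rho$ precisely when $s^{j}\J t^{j}$ for all $j$, i.e.\ condition~(1); taking $j$ divisible by $|S|!$ also yields $s^{\omega}\J t^{\omega}$, so the idempotents $s^{\omega},t^{\omega}$ lie in a common $\J$-class $J$ with isomorphic maximal subgroups. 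Using the Schützenberger representation attached to $J$, the restriction of $\rho$ to $V_1$ realises, as $\rho$ varies, every representation of $G_J$ evaluated at the group elements $s^{\omega+1}$ and $t^{\omega+1}$. Thus part (b) for all $\rho$ is equivalent to $s^{\omega+1}$ and $t^{\omega+1}$ being $\Bbbk$-linear conjugate \emph{inside the finite group} $G_J$, and I would then convert the resulting group-character conditions into the semigroup conditions (2) and (3) by means of the description of character equivalence in~\cite{ourcharacter}.

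The core is therefore the group statement: for a finite group $G$, two elements are $\Bbbk$-linear conjugate if and only if they are both $\mathbb Q$-character equivalent and $\Bbbk$-character equivalent. When $\operatorname{char}\Bbbk=0$ this is routine, since representations are semisimple, similarity is detected by traces giving $\Bbbk$-character equivalence, and $\mathbb Q\subseteq\Bbbk$ makes (2) subsumed by (3). The substance is the modular case $\operatorname{char}\Bbbk=p$, where I would use the Jordan decomposition $g=g_{p'}g_p$ into commuting $p$-regular and $p$-singular parts. The semisimple part $\rho(g_{p'})$ has eigenvalues that are $p'$-th roots of unity; because reduction modulo $p$ is a bijection on roots of unity of order prime to $p$, the multiplicities of these eigenvalues up to $\operatorname{Gal}(\overline{\Bbbk}/\Bbbk)$-orbits are governed exactly by the $\Bbbk$-characters, i.e.\ condition~(3). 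The unipotent part $\rho(g_p)$ contributes Jordan blocks whose sizes, on each eigenspace of $\rho(g_{p'})$, record the indecomposable summands of the restriction to $\langle g_p\rangle$; the passage between these block sizes and the characteristic-zero multiplicities of $p$-power roots of unity is the unitriangular decomposition matrix of a cyclic $p$-group, and this characteristic-zero datum is precisely what $\mathbb Q$-character equivalence, condition~(2), pins down. Matching both pieces simultaneously forces equality of the full elementary-divisor data of $\rho(g)$ and $\rho(h)$, hence similarity; conversely, similarity in all representations returns both pieces of character data.

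The main obstacle is exactly this modular group case, and within it the \emph{interaction} between the semisimple and unipotent parts: similarity over $\Bbbk$ only sees eigenvalue multiplicities up to the $\Bbbk$-rational Galois action, and one must show that the joint datum---for each Galois orbit of $p'$-eigenvalues, the unipotent Jordan type on the corresponding generalized eigenspace---is reconstructed by, and only by, the conjunction of the $\mathbb Q$-character (handling the $p$-power part, which collapses to the eigenvalue $1$ in characteristic $p$) and the $\Bbbk$-character (handling the $p'$-part together with the $\Bbbk$-rational bookkeeping). Verifying that neither condition can be dropped and that together they are sufficient is where the delicate representation-theoretic work lies. A secondary, more bookkeeping-heavy difficulty is the semigroup-to-group reduction itself: matching the idempotents $s^{\omega},t^{\omega}$ inside $J$, choosing compatible Schützenberger representations, and checking that the translation between semigroup and maximal-subgroup character equivalence via~\cite{ourcharacter} is faithful in both directions.
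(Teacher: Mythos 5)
Your overall architecture --- Fitting decomposition, rank sequences controlling the nilpotent part via condition (1), reduction of the invertible part on the eventual image to the maximal subgroup, and a Galois-action argument for the $p'$-semisimple piece --- is the same as the paper's. But the two steps you yourself flag as ``where the delicate work lies'' are left without an argument, and one is proposed with a mechanism that does not work. The first is the necessity of condition (2). In characteristic $p$ no character (ordinary, Brauer, or over $\Bbbk$) sees the $p$-part of a group element, so your claim that ``similarity in all representations returns both pieces of character data'' via decomposition matrices cannot produce the conjugating element that $\mathbb Q$-character equivalence demands; for instance, for a $p$-group over a field of characteristic $p$ all character data is trivial, yet the theorem asserts that linear conjugacy still forces conjugate cyclic subgroups. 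The paper extracts this from a specific module: the permutation module $\Bbbk[L_e/C]$ on the cosets of $C=\langle s^{\omega+1}\rangle$ in the $\mathscr L$-class of $s^{\omega}$, to which it applies Brauer's permutation lemma (Lemma~\ref{l:perm.matrix}) to find a fixed coset for $t^{\omega+1}$ and hence an element conjugating $\langle t^{\omega+1}\rangle$ into $\langle s^{\omega+1}\rangle$. Nothing in your proposal plays this role.

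Second, in the sufficiency direction your mechanism for matching the unipotent parts --- ``the passage between these block sizes and the characteristic-zero multiplicities of $p$-power roots of unity is the unitriangular decomposition matrix of a cyclic $p$-group'' --- is not a valid argument: an arbitrary modular representation carries no characteristic-zero multiplicities, and even for liftable modules the ordinary character does not determine the Jordan type of the reduction. What actually makes the unipotent parts match is structural rather than character-theoretic: condition (2) supplies $x,x'$ so that $h=xs^{\omega+1}x'$ generates the \emph{same} cyclic group as $g=t^{\omega+1}$; then $g$ and $h$ have identical invariant subspaces, so on the generalized $1$-eigenspace every indecomposable summand is a single Jordan block for both and the Jordan forms coincide for free. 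Relatedly, your reduction to ``$\Bbbk$-linear conjugacy inside $G_J$'' treats the identification of $s^{\omega}V$ with $t^{\omega}V$ as bookkeeping, but the intertwining isomorphism $v\mapsto xv$ between these subspaces is exactly what condition (2) provides and cannot be manufactured from condition (1) alone.
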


Note that if $\Bbbk$ has characteristic zero or characteristic relatively prime to the order of each maximal subgroup of $S$, then the third item implies the second, as is easily seen from the description of $\Bbbk$-character equivalence given below.  We remark that Theorem~\ref{t:main.thm2} seems to be new for finite groups.

To describe the results of~\cite{ourcharacter}, we shall need further notation. If $p>0$ is a prime and $G$ is a finite group, then an element $g\in G$ is called \emph{$p$-regular} if it has order prime to $p$.    We shall consider all elements to be $p$-regular when $p=0$. So from now on let $p$ be $0$ or a prime number.  Each element $g\in G$ has a unique factorization $g=g(p)g(p')$ such that $g(p)g(p')=g(p')g(p)$, $g(p)$ has order a $p$-power and $g(p')$ is $p$-regular.  If $p=0$,  then $g=g(p')$ and $g(p)=1$. Otherwise, write $|g|=p^kr$ with $\gcd(p,r)=1$.  Then $g(p)=g^{m}$ and $g(p')=g^n$ where $m,n>0$ satisfy
\begin{alignat*}{2}
m & \equiv 1\bmod p^k, &\qquad n &\equiv 0\bmod p^k\\
m &\equiv 0\bmod r,  &\qquad  n&\equiv 1\bmod r.
\end{alignat*}

Recall that $s\in S$ is a \emph{group element} if $s$ generates a cyclic group, that is, $s=s^{\omega+1}$. One can then talk about $p$-regular group elements of $S$.  We put $s(p)=s^{\omega+1}(p)$ and $s(p')=s^{\omega+1}(p')$;  these are group elements.  If $p=0$, then $s(p)=s^{\omega}$ and $s(p')=s^{\omega+1}$.

Fix an algebraic closure $\ov \Bbbk$ of $\Bbbk$ and let $\xi$ be a primitive $n^{th}$-root of unity in $\ov{\Bbbk}$ where $n$ is the least common multiple of the orders of the $p$-regular group elements of $S$; note that $\gcd(n,p)=1$ if $p>0$. The Galois group $\mathrm{Gal}(\Bbbk(\xi)/\Bbbk)$ can be identified with a subgroup $H$ of $\mathbb Z_n^{\times}$ via the map $\sigma\mapsto k$ where $\sigma(\xi)=\xi^k$. For example, if $\Bbbk=\mathbb Q$, then $H=\mathbb Z_n^{\times}$. With this notation, $s,t\in S$ are $\Bbbk$-character equivalent if and only if there exist $x,x'\in S$ with $xx'x=x$, $x'xx'=x'$, $x'x=s^{\omega}$, $xx'=t^{\omega}$ and $xs(p')x'=t(p')^j$ with $j\in H$. See~\cite[Theorem~2.12]{ourcharacter}, where the result is stated for monoids but works equally well for semigroups.

For example,  $s,t$ are $\mathbb Q$-character equivalent if and only if there exist  $x,x'\in S$ with $xx'x=x$, $x'xx'=x'$, $x'x=s^{\omega}$, $xx'=t^{\omega}$ and $x\langle s^{\omega+1}\rangle x'=\langle t^{\omega+1}\rangle$ using that $\mathrm{Gal}(\Bbbk(\xi)/\Bbbk)=\mathbb Z_n^{\times}$.  Notice that $\mathbb C$-character equivalence (i.e., the relation of being generalized conjugates) implies $\Bbbk$-character equivalence for every field $\Bbbk$.

The proof of Theorem~\ref{t:main.thm2} consists of two steps: proving the necessity and the sufficiency of these conditions. Our proof of sufficiency uses the Fitting decomposition of a linear operator.

  Let $T$ be a linear operator on a finite dimensional $\Bbbk$-vector space $V$.
Note that there are chains of $T$-invariant subspaces
\begin{gather*}
\ker T\subseteq \ker T^2\subseteq\cdots\\
TV\supseteq T^2V\supseteq\cdots
\end{gather*}
and as soon as soon as two consecutive values of either of these chains are the same, the respective chain stabilizes. By finite dimensionality, each of these chains does stabilize.
For convenience, we put $\ker^{\infty} T=\bigcup_{k\geq 1}\ker T^k$ and $\mathop{\mathrm{im}^{\infty}} T=\bigcap_{k\geq 1}T^kV$ and call the latter the \emph{eventual range} of $T$. The following theorem is standard linear algebra; see~\cite[Theorem~5.38]{repbook}.

\begin{Thm}[Fitting decomposition]
Let $T$ be a linear operator on a finite dimensional $\Bbbk$-vector space $V$.  Then there is a unique direct sum decomposition $V=U\oplus W$ into $T$-invariant subspaces such that $T|_U$ is nilpotent and $T|_W$ is invertible.  Moreover, if $m>0$ is such that $\ker T^m=\ker T^{m+1}$ and $T^mV=T^{m+1}V$, then $U=\ker^{\infty} T=\ker T^m$ and $W=\mathop{\mathrm{im}^{\infty}} T=T^mV$.
\end{Thm}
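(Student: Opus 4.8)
The plan is to fix $m>0$ large enough that both chains have stabilized and to set $U=\ker T^m$ and $W=T^mV$; such an $m$ exists by finite dimensionality (take the larger of the two stabilization indices), and for any such $m$ both chains are in fact constant from index $m$ onward, so that $U=\ker^{\infty}T$ and $W=\mathop{\mathrm{im}^{\infty}}T$. First I would check $T$-invariance, which is immediate: $T^mv=0$ gives $T^m(Tv)=T(T^mv)=0$, and $T(T^mV)=T^{m+1}V\subseteq T^mV$.

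Next I would prove $V=U\oplus W$. To see $U\cap W=0$, take $v=T^mw\in U\cap W$; then $T^{2m}w=T^mv=0$, so $w\in\ker T^{2m}=\ker T^m$ since the kernel chain is constant past $m$, whence $v=T^mw=0$. The rank--nullity identity for $T^m$ then gives $\dim U+\dim W=\dim V$, so the direct sum is all of $V$. Nilpotence of $T|_U$ is clear because $T^m$ annihilates $U=\ker T^m$; and $T|_W$ is invertible because $T(W)=T^{m+1}V=T^mV=W$ exhibits it as a surjective endomorphism of the finite dimensional space $W$.

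For uniqueness, suppose $V=U'\oplus W'$ is any $T$-invariant decomposition with $T|_{U'}$ nilpotent and $T|_{W'}$ invertible. Some power $T^N$ kills $U'$, giving $U'\subseteq\ker^{\infty}T=U$. Conversely, writing $v\in\ker^{\infty}T$ as $v=u'+w'$ and applying a suitable power $T^k$, invariance and directness force $T^kw'=0$; since $T^k|_{W'}$ is invertible this yields $w'=0$, so $v\in U'$ and hence $U=U'$. Comparing dimensions (using also $W'=T^kW'\subseteq T^kV$ for all $k$, so $W'\subseteq\mathop{\mathrm{im}^{\infty}}T=W$) then gives $W'=W$.

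Every step is routine linear algebra, so I expect no serious obstacle; the one point demanding care is the verification of $U\cap W=0$, where one must use that the kernel chain is constant well beyond $m$ (namely $\ker T^{2m}=\ker T^m$), which in turn rests on the standard fact that $\ker T^m=\ker T^{m+1}$ forces $\ker T^k=\ker T^m$ for all $k\geq m$.
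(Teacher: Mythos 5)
Your proof is correct and is the standard argument: the paper itself gives no proof of this theorem, deferring to \cite[Theorem~5.38]{repbook}, and your route (set $U=\ker T^m$, $W=T^mV$ for a stabilization index $m$, check $U\cap W=0$ via $\ker T^{2m}=\ker T^m$, get the sum from rank--nullity, and prove uniqueness by squeezing any other decomposition between $\ker^{\infty}T$ and $\mathop{\mathrm{im}^{\infty}}T$) is exactly the textbook proof being cited. The one step you flag as delicate --- that $\ker T^m=\ker T^{m+1}$ forces the kernel chain to be constant from $m$ on --- is the same stabilization fact the paper records just before stating the theorem, so nothing is missing.
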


We give a characterization of conjugacy of linear operators, based on the Fitting decomposition, inspired by ideas of Kov\'acs~\cite{Kovacs}.  If $T$ is a linear operator on an $n$-dimensional vector space $V$, the \emph{rank sequence} of $T$ is
\[\vec r(T)=(\dim TV,\dim T^2V,\ldots).\]  Note that the rank sequence is a weakly decreasing sequence of non-negative integers bounded by $n$, which becomes constant (and equal to the dimension of the eventual range of $T$) as soon as two consecutive values are equal.  In particular, there are only finitely many rank sequences of operators on an $n$-dimensional vector space.  Also note that $\vec r(T)=\vec r(T')$ implies that the eventual ranges of $T$ and $T'$ have the same dimension.

\begin{Cor}\label{c:kovacs}
Let $T,T'$ be linear operators on a finite dimensional $\Bbbk$-vector space $V$.  Then $T,T'$ are conjugate if and only if $\vec r(T)=\vec r(T')$ and there is a vector space isomorphism $F\colon \mathop{\mathrm{im}^{\infty}} T\to \mathop{\mathrm{im}^{\infty}} T'$ such that $FTv=T'Fv$ for all $v\in \mathop{\mathrm{im}^{\infty}} T$.
\end{Cor}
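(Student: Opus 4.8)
The plan is to reduce conjugacy of $T$ and $T'$ to the separate conjugacy of their Fitting summands, and then to invoke the classical fact that nilpotent operators are conjugate exactly when they share a rank sequence. Write $V=U\oplus W$ and $V=U'\oplus W'$ for the Fitting decompositions, so that $U=\ker^{\infty}T=\ker T^m$ and $W=\mathop{\mathrm{im}^{\infty}}T=T^mV$ with $T|_U$ nilpotent and $T|_W$ invertible, and similarly for $T'$. The central observation is that any $A\in GL(V)$ with $ATA\inv=T'$ satisfies $(T')^kA=AT^k$, hence carries $\ker T^k$ onto $\ker(T')^k$ and $T^kV$ onto $(T')^kV$; taking $k=m$ gives $A(U)=U'$ and $A(W)=W'$, so $A$ restricts to intertwining isomorphisms of the nilpotent parts and of the invertible parts. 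Conversely, intertwining isomorphisms $A_U\colon U\to U'$ and $A_W\colon W\to W'$ assemble, through the two direct-sum decompositions, into $A=A_U\oplus A_W$ with $AT=T'A$. Thus $T$ is conjugate to $T'$ if and only if $T|_U$ is conjugate to $T'|_{U'}$ and $T|_W$ is conjugate to $T'|_{W'}$.

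For necessity I would argue directly: if $ATA\inv=T'$, then $(T')^kV=AT^kV$, so $\dim(T')^kV=\dim T^kV$ for all $k$ and $\vec r(T)=\vec r(T')$; moreover the restriction $F=A|_{\mathop{\mathrm{im}^{\infty}}T}\colon \mathop{\mathrm{im}^{\infty}}T\to\mathop{\mathrm{im}^{\infty}}T'$ is an isomorphism satisfying $FTv=T'Fv$. This produces both of the stated conditions at once.

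For sufficiency the invertible parts are immediate: the hypothesized $F$ is precisely an intertwining isomorphism $W\to W'$, so $T|_W$ and $T'|_{W'}$ are conjugate. It remains to conjugate the nilpotent parts using only $\vec r(T)=\vec r(T')$. Here I would use that $T^kV=(T|_U)^kU\oplus(T|_W)^kW$ with $(T|_W)^kW=W$, so $\dim T^kV=\dim(T|_U)^kU+\dim W$; since the rank sequences agree and have the same eventually-constant value $\dim W=\dim\mathop{\mathrm{im}^{\infty}}T=\dim\mathop{\mathrm{im}^{\infty}}T'=\dim W'$, one obtains $\dim U=\dim U'$ together with $\dim(T|_U)^kU=\dim(T'|_{U'})^kU'$ for every $k$. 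The main obstacle, and the one genuinely nontrivial input, is the standard lemma that two nilpotent operators on spaces of equal dimension with equal rank sequences are conjugate—their Jordan partitions are recoverable from the rank data, so they coincide—which supplies the intertwining isomorphism $A_U\colon U\to U'$. Assembling $A=A_U\oplus F$ then gives $AT=T'A$, as required. The subtlety to keep in view throughout is that the rank sequence records nothing about the invertible part (its restriction of the sequence is constant), which is exactly why the separate hypothesis on $F$ is needed; the argument must therefore split cleanly along the Fitting summands rather than attempt to read off conjugacy from the rank sequence of $T$ alone.
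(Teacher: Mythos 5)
Your proposal is correct and follows essentially the same route as the paper: both arguments split $V$ along the Fitting decomposition, use the given isomorphism $F$ to handle the invertible summand, and observe that the differences $\dim T^kV-\dim T^{k+1}V$ recover the Jordan partition of the nilpotent summand, so equal rank sequences force the nilpotent parts to be conjugate. The only cosmetic difference is that you verify $A(\mathop{\mathrm{im}^{\infty}}T)=\mathop{\mathrm{im}^{\infty}}T'$ directly from $(T')^kA=AT^k$ where the paper appeals to uniqueness of the Fitting decomposition.
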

\begin{proof}
Trivially, if $A$ is an invertible operator with $ATA^{-1}=T'$, then $\vec r(T)=\vec r(T')$.  Also, by the uniqueness in the Fitting decomposition, $A(\mathop{\mathrm{im}^{\infty}}T)=\mathop{\mathrm{im}^{\infty}} T'$. Clearly, if $v\in \mathop{\mathrm{im}^{\infty}} T$, then $ATv=T'Av$.  Thus the conditions are necessary.

For sufficiency, note that $\dim \mathop{\mathrm{im}^{\infty}} T=\dim \mathop{\mathrm{im}^{\infty}} T'$ because $\vec r(T)=\vec r(T')$.  In light of the Fitting decomposition and the existence of the isomorphism $F$, to show that the $\Bbbk[x]$-module corresponding to the action of $T$ on $V$ is isomorphic to the $\Bbbk[x]$-module corresponding to the action of $T'$ on $V$ it suffices to show that the nilpotent operators $N=T|_{\ker^{\infty} T}$ and $N'=T'|_{\ker^{\infty} T'}$ have the same Jordan canonical form (note that they are both operators on a space of the same dimension).  Notice that $\dim T^iV-\dim T^{i+1}V=\dim N^i(\ker^{\infty} T)-\dim N^{i+1}(\ker^{\infty}T)$ is the number of Jordan blocks of $N$ of degree greater than $i$ for all $i\geq 0$.  Thus $\vec r(T)$ determines the Jordan canonical form of $N$; similarly, $\vec r(T')$ determines the Jordan canonical form of $N'$ and so $\vec r(T)=\vec r(T')$ implies that $N,N'$ have the same Jordan canonical form.  This proves that $T,T'$ are conjugate.
\end{proof}

The Fitting decomposition for the image of an element under a representation of a finite semigroup is easy to describe.

\begin{Prop}\label{p:fittingdecomp.finite}
Let $\rho\colon S\to M_n(\Bbbk)$ be a representation of a finite semigroup and put $V=\Bbbk^n$ with its usual left $\Bbbk S$-module structure.  Then, for $s\in S$, the Fitting decomposition of $\rho(S)$ is given by $\ker^{\infty} \rho(s)=(1-s^{\omega})V$ and $\mathop{\mathrm{im}^{\infty}} \rho(s)= s^{\omega}V$.
\end{Prop}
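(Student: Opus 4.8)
The plan is to recognize $\rho(s^{\omega})$ as the Fitting projection onto the eventual range and read off both summands from it. Write $T=\rho(s)$ and $e=\rho(s^{\omega})$, and let $N=|S|!$ so that $s^{\omega}=s^{N}$ and hence $T^{N}=\rho(s^{N})=\rho(s^{\omega})=e$. Since $s^{\omega}$ is idempotent in $S$, the operator $e$ satisfies $e^{2}=\rho((s^{\omega})^{2})=\rho(s^{\omega})=e$, so $e$ is an idempotent on $V$ and $V=eV\oplus(1-e)V$; here $1$ denotes the identity operator on $V$, which makes sense even when $S$ has no identity, and $(1-s^{\omega})V$ is read as $(1-e)V$. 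Because $s^{\omega}$ is a power of $s$, the operators $\rho(s)$ and $\rho(s^{\omega})$ commute, so both summands $eV$ and $(1-e)V$ are $T$-invariant.

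Next I would check that $T$ restricts to a nilpotent operator on $(1-e)V$ and to an invertible operator on $eV$. On $(1-e)V$ the idempotent $e$ acts as $0$, so $T^{N}=e$ vanishes there and $T|_{(1-e)V}$ is nilpotent. On $eV$ the idempotent $e$ acts as the identity, so $(T|_{eV})^{N}=e|_{eV}=\mathrm{id}_{eV}$, which forces $T|_{eV}$ to be invertible, with inverse $T^{N-1}|_{eV}$. Having exhibited a decomposition $V=eV\oplus(1-e)V$ into $T$-invariant subspaces on which $T$ is respectively invertible and nilpotent, I would invoke the uniqueness clause of the Fitting decomposition to conclude $\mathop{\mathrm{im}^{\infty}} T=eV=s^{\omega}V$ and $\ker^{\infty}T=(1-e)V=(1-s^{\omega})V$, as claimed.

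The main obstacle, insofar as there is one, is simply the recognition that $\rho(s^{\omega})$ is the Fitting idempotent; once this is seen, everything reduces to the elementary fact that an idempotent $e$ splits $V$ as $eV\oplus(1-e)V$. The only point demanding a sliver of care is confirming that $T$ is genuinely invertible on $eV$ rather than merely having an idempotent power there, and this follows at once from $(T|_{eV})^{N}=\mathrm{id}_{eV}$.
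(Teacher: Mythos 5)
Your proof is correct and rests on the same key observation as the paper's, namely that $\rho(s^{\omega})=\rho(s)^{N}$ is an idempotent power of $\rho(s)$. The only (cosmetic) difference is that you verify nilpotency on $(1-e)V$ and invertibility on $eV$ and then invoke the uniqueness clause of the Fitting decomposition, whereas the paper reads the answer directly off the stabilization formula $\mathop{\mathrm{im}^{\infty}} T=T^{m}V$ and $\ker^{\infty}T=\ker T^{m}$; both are short appeals to the same theorem.
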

\begin{proof}
Choose $m>0$ such that $\mathop{\mathrm{im}^{\infty}} \rho(s)=\rho(s)^mV=\rho(s)^{m+k}V$ and $\ker^{\infty}\rho(s)=\ker \rho(s)^m=\ker \rho(s)^{m+k}$ for all $k\geq 1$.  As $s^{\omega}=s^N$ for some $N>m$, we conclude that $\mathop{\mathrm{im}^{\infty}} \rho(s)=s^{\omega}V$ and $\ker^{\infty}\rho(s)=\ker \rho(s^{\omega}) = (1-s^{\omega})V$, where the last equality uses that $s^{\omega}$ is idempotent.
\end{proof}


The Fitting decomposition essentially  reduces the problem from semigroups to groups.


We shall need the following key lemma; see~\cite{Kovacs.perm} for a proof.
\begin{Lemma}[Brauer's lemma]\label{l:perm.matrix}
Let $P,Q\in GL_n(\Bbbk)$ be permutation matrices.  Then $P$ and $Q$ are conjugate in $GL_n(\Bbbk)$ if and only if they are conjugate in the symmetric group $S_n$ (viewed as the group of $n\times n$ permutation matrices).
\end{Lemma}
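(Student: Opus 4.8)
The forward (``if'') direction is immediate: sending a permutation to its matrix is a group homomorphism $S_n\to GL_n(\Bbbk)$, so if $\tau=\pi\sigma\pi^{-1}$ in $S_n$ then $Q=RPR^{-1}$ with $R$ the matrix of $\pi$, which lies in $GL_n(\Bbbk)$. The content is the converse, so I would show that similarity of $P$ and $Q$ forces the underlying permutations $\sigma,\tau$ to have the same cycle type; since cycle type is a complete invariant for conjugacy in $S_n$, this is exactly what is needed.

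I would translate similarity into module language: let $x$ act on $V=\Bbbk^n$ as $P$, making $V$ a finitely generated $\Bbbk[x]$-module $M_\sigma$, and similarly $M_\tau$ for $Q$; then $P$ and $Q$ are conjugate in $GL_n(\Bbbk)$ if and only if $M_\sigma\cong M_\tau$ as $\Bbbk[x]$-modules. Decomposing $V$ along the cycles of $\sigma$, a single $d$-cycle contributes the regular module of $\Bbbk[\mathbb Z/d]$, that is $\Bbbk[x]/(x^d-1)$, so $M_\sigma\cong\bigoplus_d\big(\Bbbk[x]/(x^d-1)\big)^{c_d}$, where $c_d$ is the number of $d$-cycles of $\sigma$. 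By Krull--Schmidt over the PID $\Bbbk[x]$, the isomorphism type of $M_\sigma$ is recorded precisely by the multiplicities of the primary cyclic summands $\Bbbk[x]/(q^e)$ with $q$ monic irreducible and $e\geq1$; the goal is then to recover each $c_d$ from these multiplicities.

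Here the characteristic intervenes, and this is the step I expect to be the main obstacle. Over a field of characteristic $0$ one can argue purely with traces: $\mathrm{tr}(P^k)$ is the number of fixed points of $\sigma^k$, namely $\sum_{d\mid k}d\,c_d$, and since similar matrices have equal power-traces, Möbius inversion recovers each $c_k$ at once. In characteristic $p>0$ this shortcut fails because the traces are only known modulo $p$, so I would instead factor $x^d-1$ directly: writing $d=p^a m$ with $p\nmid m$ (and $a=0$ when $p=0$), one has $x^d-1=(x^m-1)^{p^a}$ with $x^m-1$ separable, so by the Chinese Remainder Theorem $\Bbbk[x]/(x^d-1)\cong\bigoplus_{q\mid x^m-1}\Bbbk[x]/(q^{p^a})$, the $q$ ranging over the distinct monic irreducible factors. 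Consequently the exponents occurring are exactly the $p$-powers, and for an irreducible $q$ whose roots have multiplicative order $r$ (necessarily prime to $p$), the multiplicity of $\Bbbk[x]/(q^{p^b})$ in $M_\sigma$ equals $G_b(r):=\sum_{m:\,r\mid m,\ p\nmid m}c_{p^b m}$, a quantity depending only on $r$ and $b$.

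Finally I would invert. Every $r$ prime to $p$ arises as the order of a root of some irreducible factor (a primitive $r$-th root of unity exists in $\ov\Bbbk$ since $x^r-1$ is separable), so all the numbers $G_b(r)$ are read off from the module invariants, independently of which Galois orbit of $r$-th roots $q$ names. Fixing $b$ and performing Möbius inversion for the ``sum over multiples'' relation on the prime-to-$p$ divisibility lattice yields $c_{p^b r}=\sum_{m:\,r\mid m,\ p\nmid m}\mu(m/r)\,G_b(m)$, a finite sum. Since every $d$ factors uniquely as $d=p^b r$ with $p\nmid r$, this determines all $c_d$ from the invariants of $M_\sigma$, and likewise all $c_d'$ from those of $M_\tau$; hence $M_\sigma\cong M_\tau$ forces $c_d=c_d'$ for every $d$, so $\sigma$ and $\tau$ have the same cycle type and are conjugate in $S_n$.
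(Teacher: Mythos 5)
Your proof is correct. Note, however, that the paper does not actually prove this lemma: it states it and refers to Kov\'acs's note \cite{Kovacs.perm} for the proof, so there is no in-paper argument to compare against line by line. Your route --- decomposing the permutation module as $\bigoplus_d\bigl(\Bbbk[x]/(x^d-1)\bigr)^{c_d}$, using $x^{p^am}-1=(x^m-1)^{p^a}$ with $x^m-1$ separable to list the elementary divisors $q^{p^b}$, and recovering the $c_d$ by M\"obius inversion on the prime-to-$p$ divisibility lattice --- is sound; the finiteness of all sums, the fact that the multiplicity of $\Bbbk[x]/(q^{p^b})$ depends only on $b$ and on the order $r$ of the roots of $q$, and the invertibility of the ``sum over multiples'' transform all check out. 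The standard proof in the cited source is shorter and avoids the structure theorem: over any field the fixed space of $P^k$ has dimension equal to the number of orbits of $\sigma^k$, namely $\sum_d \gcd(d,k)\,c_d$, and since the matrix $\bigl(\gcd(d,k)\bigr)_{d,k=1}^{n}$ is nonsingular (Smith's determinant, $\det=\prod_{i\le n}\phi(i)$), the similarity invariants $\dim\ker(P^k-I)$ already determine the cycle type in one step. What your argument buys in exchange for its extra length is finer information --- it exhibits the full $\Bbbk[x]$-module structure (all Jordan/rational-canonical data) of a permutation matrix in any characteristic, not just the cycle counts --- and it makes transparent exactly where separability versus the Frobenius kernel enters.
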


Our next goal is to understand how the Galois action affects conjugacy.

\begin{Prop}\label{p:galois.act}
Let $\Bbbk$ be a field of characteristic $p\geq 0$ and $\xi$ a primitive $n^{th}$-root of unity in an algebraic closure $\ov{\Bbbk}$ of $\Bbbk$ where $\gcd(n,p)=1$ in the case that $p>0$.  Suppose that $T$ is a linear operator on a finite dimensional $\Bbbk$-vector space $V$ satisfying $T^n=1$.  Then $T$ and $T^j$ are conjugate for any $j\in H$ where $H$ is the subgroup of $\mathbb Z_n^{\times}$ corresponding to $\mathrm{Gal}(\Bbbk(\xi)/\Bbbk)$ as above.
\end{Prop}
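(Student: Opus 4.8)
The plan is to reduce the problem to representation theory of the cyclic group $\mathbb Z_n$ and then invoke the Galois action on its character theory. Since $T^n=1$, the operator $T$ makes $V$ into a $\Bbbk[\mathbb Z_n]$-module, where $\mathbb Z_n=\langle g\rangle$ acts by $g\cdot v = Tv$. Because $\gcd(n,p)=1$, Maschke's theorem applies and $\Bbbk[\mathbb Z_n]$ is semisimple, so $V$ decomposes as a direct sum of simple $\Bbbk[\mathbb Z_n]$-modules. Conjugacy of $T$ and $T^j$ as operators is exactly isomorphism of the two $\Bbbk[\mathbb Z_n]$-module structures, where $T^j$ corresponds to the module where $g$ acts by $T^j$, i.e. the module obtained by restricting the original action along the automorphism $g\mapsto g^j$ of $\mathbb Z_n$ (this is where $j\in\mathbb Z_n^\times$ is used — it must be a unit for $g\mapsto g^j$ to be an automorphism).

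First I would observe that, over the splitting field $\Bbbk(\xi)$, the simple modules are one-dimensional and indexed by the characters $\chi_a\colon g\mapsto \xi^a$ for $a\in\mathbb Z_n$. The simple $\Bbbk[\mathbb Z_n]$-modules then correspond to the orbits of the Galois group $\mathrm{Gal}(\Bbbk(\xi)/\Bbbk)$ acting on these characters: the simple $\Bbbk$-module $S_{\mathcal O}$ attached to an orbit $\mathcal O\subseteq\mathbb Z_n$ becomes, after extension of scalars to $\Bbbk(\xi)$, the sum $\bigoplus_{a\in\mathcal O}\Bbbk(\xi)_{\chi_a}$. The key point is that twisting the action by $g\mapsto g^j$ sends the character $\chi_a$ to $\chi_{ja}$, so it permutes the one-dimensional pieces by $a\mapsto ja$.

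The crux is then to show that for $j\in H$ this twist preserves every isomorphism class of simple $\Bbbk[\mathbb Z_n]$-module, i.e. fixes every Galois orbit setwise. Since $j\in H$ corresponds to an element $\sigma\in\mathrm{Gal}(\Bbbk(\xi)/\Bbbk)$ with $\sigma(\xi)=\xi^j$, multiplication by $j$ on the index set is precisely the permutation induced by $\sigma$ on the characters; by definition the Galois orbits are the orbits of exactly this action, so $j\cdot\mathcal O=\mathcal O$ for each orbit $\mathcal O$. Consequently the $j$-twisted module is isomorphic to the untwisted one summand by summand over $\Bbbk(\xi)$, and hence the multiplicities of each simple $\Bbbk[\mathbb Z_n]$-module in $V$ and in its $j$-twist agree. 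By the Krull--Schmidt theorem (or just uniqueness of semisimple decomposition), $V$ and its $j$-twist are isomorphic as $\Bbbk[\mathbb Z_n]$-modules, which says exactly that $T$ and $T^j$ are conjugate.

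The main obstacle is handling the rationality subtlety cleanly: one must argue that equal multiplicities of $\Bbbk(\xi)$-characters within each Galois orbit forces isomorphism over the smaller field $\Bbbk$, rather than merely over $\Bbbk(\xi)$. The safe route is to work intrinsically with simple $\Bbbk[\mathbb Z_n]$-modules (indexed by Galois orbits) and show the twist fixes each such isomorphism class, so I never need to descend an isomorphism from $\Bbbk(\xi)$ down to $\Bbbk$ by hand. An alternative, perhaps slicker, argument avoids module theory entirely: since rational canonical form is insensitive to field extension (as noted in the introduction), it suffices to prove $T$ and $T^j$ are conjugate over $\ov\Bbbk$ while tracking that the resulting similarity can be taken over $\Bbbk$ — but making that descent explicit is precisely the friction the module-theoretic phrasing sidesteps, so I would favor the decomposition argument above.
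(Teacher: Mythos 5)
Your proof is correct, but it is packaged differently from the paper's. The paper first notes that $\langle T\rangle=\langle T^j\rangle$, so the two operators have the same invariant (hence the same cyclic invariant) subspaces; it then reduces to a single cyclic subspace and shows that the minimal polynomials of $T$ and $T^j$ coincide, by diagonalizing over $\Bbbk(\xi)$ and observing that the roots of the minimal polynomial of $T^j$ are the images of those of $T$ under the Galois element $\sigma$ with $\sigma(\xi)=\xi^j$, which must permute the roots of a polynomial with coefficients in $\Bbbk$. Since a cyclic module over $\Bbbk[x]$ is determined by its minimal polynomial, conjugacy follows. You instead exploit semisimplicity of $\Bbbk[\mathbb Z_n]\cong\Bbbk[x]/(x^n-1)$ (Maschke, using $\gcd(n,p)=1$), identify the simple modules with the Galois orbits of the characters $\chi_a\colon g\mapsto\xi^a$ (equivalently, with the irreducible factors of $x^n-1$ over $\Bbbk$), and check that the twist along $g\mapsto g^j$ permutes the characters within each orbit and hence fixes every simple isomorphism class; comparing multiplicities then gives the module isomorphism. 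The two arguments rest on the same kernel --- $\sigma$ preserves anything defined over $\Bbbk$ --- but yours avoids the reduction to cyclic subspaces at the cost of invoking the Wedderburn/isotypic machinery, while the paper's stays within elementary linear algebra (rational canonical form and minimal polynomials). Your closing remark about the rationality subtlety is well taken, and your chosen resolution --- working intrinsically with simple $\Bbbk[\mathbb Z_n]$-modules rather than descending an isomorphism from $\Bbbk(\xi)$ --- is exactly the right way to make the multiplicity comparison legitimate over $\Bbbk$.
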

\begin{proof}
First note that since $j\in \mathbb Z_n^{\times}$ and $T$ has finite order dividing $n$, it follows that $\langle T\rangle=\langle T^j\rangle$ and hence $T$ and $T^j$ have the same invariant subspaces of $V$. Consequently, they have the same cyclic invariant subspaces.  As $V$ is a direct sum of cyclic invariant subspaces, we may assume without loss of generality that $V$ is a cyclic invariant subspace for both $T$ and $T^j$.   Moreover, since  the polynomial $x^n-1$, which splits into distinct linear factors over $\Bbbk(\xi)$ by hypothesis on $n$, vanishes on both $T$ and $T^j$ it follows  that the minimal polynomials of $p(x)$ and $q(x)$ of $T$ and $T^j$, respectively, both split into distinct linear factors over $\Bbbk(\xi)$.  To prove the proposition, it suffices to show that $p(x)=q(x)$.

Let $\lambda_1,\ldots, \lambda_r$ be the roots of $p(x)$ in $\Bbbk(\xi)$.  As $p(x)$ has no repeated roots and $V$ is cyclic, there is a basis of $\Bbbk(\xi)\otimes_{\Bbbk} V$ such that $T$ is a diagonal matrix  with diagonal entries $\lambda_1,\ldots, \lambda_r$  and hence $T^j$ is also diagonal in this basis with diagonal entries $\lambda_1^j,\ldots, \lambda_r^j$.  Thus $\{\lambda_1^j,\ldots,\lambda_r^j\}$ are the roots of $q(x)$ in $\Bbbk(\xi)$.  As the $\lambda_i$ are $n^{th}$-roots of unity, and hence powers of $\xi$,  if $\sigma\in \mathrm{Gal}(\Bbbk(\xi)/\Bbbk)$ is the element with $\sigma(\xi)=\xi^j$, then $\sigma(\lambda_i)=\lambda_i^j$.  It follows that the roots of $q(x)$ are $\sigma(\lambda_1),\ldots, \sigma(\lambda_r)$ and hence $p(x)=q(x)$ because $\mathrm{Gal}(\Bbbk(\xi)/\Bbbk)$ permutes the roots of $p(x)$ and both $p(x)$ and $q(x)$ have no repeated roots and split over $\Bbbk(\xi)$.
\end{proof}

We are now prepared to prove our main result; Theorem~\ref{t:main.thm} is the special case that $\Bbbk=\mathbb C$.

\begin{proof}[Proof of Theorem~\ref{t:main.thm2}]
Assume that $\Bbbk$ has characteristic $p\geq 0$.
We begin with the necessity of (1), (2) and (3).
If $s,t$ are $\Bbbk$-linear conjugates, then each character of $S$ over $\Bbbk$ agrees on $s$ and $t$ and so (3) holds.  Suppose that $s^k$ is not $\mathscr J$-equivalent to $t^k$ for some $k\geq 1$.  Without loss of generality, assume that the principal ideal generated by $s^k$ is not contained in the principal ideal $I$ generated by $t^k$.  Let $S^1$ be the result of adjoining an identity to $S$.  Then $V=\Bbbk S^1/\Bbbk I$ is a left $\Bbbk S$-module annihilated by $t^k$ but not by $s^k$ (as $s^k(1+\Bbbk I)\neq \Bbbk I)$).  Therefore, if $\rho$ is the representation afforded by $V$, then $\rho(s)$ is not conjugate to $\rho(t)$.  Thus (1) holds.

The  proof of (2) is a bit trickier.  Put $e=s^{\omega}$ and $f=t^{\omega}$ and note that $e\mathrel{\mathscr J} f$ by (1).  Put $g=s^{\omega+1}$ and $h=t^{\omega+1}$.  Notice that $g$ and $h$ are group elements.  By considering the action of $s,t$ on $\Bbbk S^1$, and using that they are $\Bbbk$-linear conjugate, we deduce that $\langle s\rangle\cong \langle t\rangle$ via an isomorphism taking $s$ to $t$ and hence $g$ and $h$ have the same order.

The group $C=\langle g\rangle$ acts freely on the right of the $\mathscr L$-class $L_e$ of $e$ and we denote the orbit of $x\in L_e$ by $xC$.  We can define a $\Bbbk S$-module structure on $\Bbbk [L_e/C]$ by
\[s\cdot xC = \begin{cases} sxC, & \text{if}\ sx\in L_e\\ 0, & \text{else}\end{cases}\] for $x\in L_e$ and $s\in S$. By Corollary~\ref{c:kovacs}, Proposition~\ref{p:fittingdecomp.finite} and our assumption that $s$ and $t$ are $\Bbbk$-linear conjugates, there must be a vector space isomorphism $T\colon e\Bbbk [L_e/C]\to f\Bbbk [L_e/C]$ intertwining the actions of $s$ and $t$.  However, $s$ acts the same as $g$ on $e\Bbbk [L_e/c]$ and $t$ acts the same as $h$ on $f\Bbbk [L_e/C]$ and so $T$ intertwines the actions of $g$ and $h$. Note that $eL_e\cap L_e=G_e$ is the maximal subgroup of $S$ at $e$ and so $e[\Bbbk L_e/C]\cong \Bbbk [G_e/C]$ is a permutation module for $C$.   Also, $fL_e\cap L_e = R_f\cap L_e$ is permuted by $\langle h\rangle$ on the left and $C$ on the right with commuting actions and so $f\Bbbk [L_e/C]$ is a permutation module for $\langle h\rangle$.  Now $g$ has a fixed point on $G_e/C$, namely the coset $C$.  It follows from Lemma~\ref{l:perm.matrix} that $h$ has a fixed point $xC$ with $x\in R_f\cap L_e$; so $hxC=xC$.  By basic semigroup theory, there is then an element $x'\in R_e\cap L_f$ with $xx'x=x$, $x'xx'=x'$, $x'x=e$ and $xx'=f$. One easily checks that $\psi\colon G_f\to G_e$ given by $\psi(z) = x'zx$ is a group isomorphism and so $x'hx$ is an element of $G_e$ of the same order as $h$, and hence of the same order as $g$.  But $x'hxC=x'xC=C$ and so $x'hx\in C=\langle g\rangle$.  Thus $x'\langle h\rangle x=\langle x'hx\rangle =\langle g\rangle$ as $x'hx$ and $g$ have the same order.  We conclude that $s$ and $t$ are $\mathbb Q$-character equivalent.

To prove sufficiency, assume that (1), (2) and (3) hold. Let $n$ be the least common multiple of the orders of the $p$-regular group elements of $S$ and let $H\leq \mathbb Z_n^{\times}$ be the subgroup associated to $\mathrm{Gal}(\Bbbk(\xi)/\Bbbk)$ where $\xi$ is a primitive $n^{th}$-root of unity in a fixed algebraic closure of $\Bbbk$.  Note that $n$ is not divisible by the characteristic of $\Bbbk$.

Let $\rho\colon S\to M_r(\Bbbk)$ be a representation.  Put $V=\Bbbk^r$ with its usual left $\Bbbk S$-module structure.  From (1), and the well-known fact that two matrices are $\mathscr J$-equivalent if and only if they have the same rank~\cite[Section~2.2, Exercise 6]{CP}, it follows that $\vec r(\rho(s))=\vec r(\rho(t))$.  Thus to prove that $\rho(s)$ and $\rho(t)$ are conjugate, it suffices by Corollary~\ref{c:kovacs} and Proposition~\ref{p:fittingdecomp.finite} to construct a linear isomorphism $F\colon s^{\omega}V\to t^{\omega}V$ such that $F(sv)=tF(v)$ for all $v\in s^{\omega}V$.

Since $s,t$ are $\mathbb Q$-character equivalent, we can choose $x,x'\in S$ with $xx'x=x$, $x'xx'=x'$ and $x'x=s^{\omega}$, $xx'=t^{\omega}$ such that $h=xs^{\omega+1}x'$ generates the same cyclic group as $g=t^{\omega+1}$.
We first define a linear isomorphism $F'\colon s^{\omega}V\to t^{\omega}V$ such that $F'(sv)=hF'(v)$ for all $v\in s^{\omega}V$.
  Define $F'$ by $F'(v)=xv$ for $v\in s^{\omega}V$; clearly $F'$ is linear.  First note that $xv=xx'xv=t^{\omega}xv$ and so $F'(v)\in t^{\omega}V$.  Also,
\[hF'(v)=hxv=xs^{\omega+1}x'xv=xss^{\omega}x'xv=xss^{\omega}v=xsv=F'(sv)\]
using that $x'x=s^{\omega}$ and $s^{\omega}v=v$.  Similarly, there is a linear mapping $G\colon t^{\omega}V\to s^{\omega}V$ defined by $G(w)=x'w$ since $s^{\omega}x'w=x'xx'w=x'w$.  We claim that these mappings are mutually inverse.  Indeed, $GF'(v)=x'xv=s^{\omega}v=v$ for $v\in s^{\omega}V$; similarly $F'G(w)=xx'w=t^{\omega}w=w$ for $w\in t^{\omega}V$.  This shows that $F'$ is a linear isomorphism intertwining the action of $s$ on $s^{\omega}V$ and $h$ on $t^{\omega}V$.  
It, therefore, suffices to show that there is an invertible operator on $W=t^{\omega}V$ conjugating $h|_W$ to  $t|_W$ (or equivalently $g|_W$).  Also note that by construction $h$ is a generalized conjugate of $s$ and hence $\Bbbk$-character equivalent to $s$, and thus to $t$ by (3).

Note that since $h$ and $g$ generate the same cyclic subgroup $C$, they have the same invariant subspaces on $W$.  Write $|C|=p^rm$, where $p^r$ is interpreted as $1$ if $p=0$, and  $\gcd(p,m)=1$ if $p>0$. Then $h(p')$ and $g(p')$ both have order $m$ and hence generate the same cyclic subgroup $C'$ of $C$.
Observe that $W=V_1\oplus W'$ where $V_1$ is the generalized eigenspace of $1$ for $g|_W$ (which is also the generalized eigenspace of $1$ for $h|_W$ as they are both powers of each other) and $W'$ is a semisimple $\Bbbk C$-module not containing the trivial representation (since
\[\Bbbk C\cong \Bbbk [z]/((z-1)^{p^r})\times \Bbbk [z]/\left(\frac{z^m-1}{z-1}\right)\] and $\gcd(m,p)=1$ if $p>0$, whence $z^m-1$ splits into distinct linear factors over $\ov\Bbbk$).

 Since $g$ and $h$ have the same invariant subspaces on $V_1$, the vector space $V_1$ is a direct sum of indecomposable invariant subspaces and each indecomposable invariant subspace is isomorphic to a Jordan block with eigenvalue $1$ for both $g$ and $h$, it follows that $g|_{V_1}$ and $h|_{V_1}$ have the same Jordan canonical form and hence there is an invertible operator on $V_1$ conjugating $h|_{V_1}$  to $g|_{V_1}$.  Note that $h(p)$ and $g(p)$ act trivially on any semisimple $\Bbbk C$-module (since $h(p)-1$ and $g(p)-1$ are nilpotent in the commutative algebra $\Bbbk C$) and so $h|_{W'}=h(p')|_{W'}$ and $g|_{W'}=g(p')|_{W'}$.  As $h(p')$ and $g(p')$ have order $m$ prime to $p$, the subgroup $C'$ they generate has a semisimple algebra over $\Bbbk$.   Since $W'$ contains no copy of the trivial $\Bbbk C$-module and $h(p')|_{W'}=h|_{W'}$ and $g(p')|_{W'}=g|_{W'}$, it follows that $W'$ is the sum of all non-trivial isotypic components of $W$ for $C'$ and $V_1$ is the isotopic component of the trivial representation.  Let $\psi$ be the automorphism of $C'$ taking $h(p')$ to $g(p')$.   For $U$ a $\Bbbk C'$-module, let $U^{\psi}$ denote the $\Bbbk C'$-module with underlying vector space $U$ and module action $x\cdot u=\psi(x)u$ for $x\in C'$ and $u\in U$. If $Th(p')|_WT\inv = g(p')|_W=\psi(h(p'))|_W$ with $T\in GL(W)$, then $T$ provides an isomorphism $W\to W^{\psi}$.    It follows that if $\gamma$ is an irreducible representation of $C'$ then $T$ takes the isotypic component of $\gamma$ in $W$ to the isotypic component of $\gamma$ in $W^{\psi}$, which as a subspace of $W$ is the isotypic component of $\gamma\circ \psi\inv$ with respect to the original module structure.  Therefore,
 $T(V_1)=V_1$ and $T(W')=W'$.  Thus to get that $h|_{W'}=h(p')|_{W'}$ is conjugate to $g|_{W'}=g(p')|_{W'}$, it suffices to prove that $h(p')|_W$ is conjugate to $g(p')|_W$ as operators on $W$.

Since $h$ is $\Bbbk$-character equivalent to $t$, we can find $y,y'\in S$ with $yy'y=y$, $y'yy'=y'$, $yy'=t^{\omega}=h^{\omega}=y'y$ and $yh(p')y' = g(p')^j$ with $j\in H$.  Then $y,y'\in G_{t^{\omega}}$ and $y=y\inv$, and so $h(p')$ is conjugate to $g(p')^j$ in $G_{t^{\omega}}$ and hence they have conjugate actions on $W$.  Thus it suffices to show that $g(p')^j|_W$  is conjugate to $g(p')|_W$.
  Note that $g(p')$ is a $p$-regular group element of $S$ and hence has order dividing $n$.  Thus $g(p')|_{W}$ is conjugate to $g(p')^j|_{W}$ by Proposition~\ref{p:galois.act}.  This completes the proof.
\end{proof}

\section{Examples}
In this section we explore linear conjugacy in some important families of semigroups.

\subsection{Full transformation monoids}
Consider $T_n$, the full transformation monoid of degree $n$. Define the \emph{rank} of $f\in T_n$ to be the cardinality of its image.  It is well known that $f\J g$ if and only if they have the same rank~\cite[Theorem~2.9]{CP}.   An element $f\in T_n$ acts on the image of $f^{\omega}$ as a permutation.  One has that $f,g\in T_n$ are generalized conjugates if and only if $f^{\omega}$ and $g^{\omega}$ have the same rank and $f$ and $g$ have the same cycle structure as permutations of $\mathrm{Im}\ f^{\omega}$ and $\mathrm{Im}\ g^{\omega}$, respectively; see~\cite[Execise~7.10]{repbook}.  Two functions $f,g$ are conjugate by an element of $S_n$ if and only if they have isomorphic functional digraphs, where the functional digraph of $h\in T_n$ has vertex set $\{ 1,\ldots,n\}$ and an edge from $i$ to $h(i)$ for each $i\in \{1,\ldots, n\}$.

By the standard representation of $T_n$, we mean the representation $\rho\colon T_n\to M_n(\mathbb C)$ given by
\[\rho(f)_{ij} = \begin{cases}1, & \text{if}\ f(j)=i\\ 0, & \text{else.}\end{cases} \]

\begin{Thm}
Let $f,g\in T_n$.  Then the following are equivalent.
\begin{enumerate}
  \item $\mathrm{rank}(f^i)=\mathrm{rank}(g^i)$ for $i=1,\ldots, n$ and $f|_{\mathrm{Im}\ f^{\omega}}$ has the same cycle structure as $g|_{\mathrm{Im}\ g^{\omega}}$.
  \item $\rho(f)$ is similar to $\rho(g)$.
  \item $f$ and $g$ are linear conjugates.
\end{enumerate}
\end{Thm}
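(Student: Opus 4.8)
The plan is to prove the implications $(3)\Rightarrow(2)\Rightarrow(1)\Rightarrow(3)$, leaning on Theorem~\ref{t:main.thm} for the last implication and on the Fitting decomposition together with Brauer's lemma for the middle one. The implication $(3)\Rightarrow(2)$ is immediate: if $f$ and $g$ are linear conjugates then their images under \emph{every} complex representation are similar, and in particular under the standard representation $\rho$, so $\rho(f)$ is similar to $\rho(g)$. Throughout I will use two elementary features of $\rho$. First, since $\rho(f)e_j=e_{f(j)}$ on the standard basis, the image of $\rho(f)$ is spanned by $\{e_i : i\in \mathrm{Im}\,f\}$, so the matrix rank of $\rho(f)$ equals $|\mathrm{Im}\,f|=\mathrm{rank}(f)$; applying this to the homomorphism identity $\rho(f)^i=\rho(f^i)$ gives $\mathrm{rank}(\rho(f)^i)=\mathrm{rank}(f^i)$ for all $i$. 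Second, by Proposition~\ref{p:fittingdecomp.finite} the eventual range of $\rho(f)$ is $\rho(f^{\omega})V$, which is spanned by $\{e_i : i\in \mathrm{Im}\,f^{\omega}\}$, and on this subspace $\rho(f)$ permutes these basis vectors according to the permutation $f|_{\mathrm{Im}\,f^{\omega}}$; thus $\rho(f)$ restricted to its eventual range is exactly the permutation matrix of $f|_{\mathrm{Im}\,f^{\omega}}$.

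For $(2)\Rightarrow(1)$, suppose $\rho(f)$ is similar to $\rho(g)$. Then $\rho(f)^i$ is similar to $\rho(g)^i$ for every $i$, so these matrices have equal rank, and by the first observation $\mathrm{rank}(f^i)=\mathrm{rank}(g^i)$ for all $i$, in particular for $i=1,\dots,n$; this is the first half of $(1)$. For the cycle-structure assertion, the uniqueness in the Fitting decomposition (as used in the proof of Corollary~\ref{c:kovacs}) shows that any conjugating matrix carries the eventual range of $\rho(f)$ isomorphically onto that of $\rho(g)$ and intertwines the two operators there; hence $\rho(f)$ and $\rho(g)$ restricted to their eventual ranges are similar operators on spaces of equal dimension. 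By the second observation these restrictions are the permutation matrices of $f|_{\mathrm{Im}\,f^{\omega}}$ and $g|_{\mathrm{Im}\,g^{\omega}}$, so Brauer's lemma (Lemma~\ref{l:perm.matrix}) forces these permutations to be conjugate in the symmetric group, i.e.\ to have the same cycle structure. This gives the second half of $(1)$.

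For $(1)\Rightarrow(3)$, I invoke Theorem~\ref{t:main.thm}: it suffices to deduce from $(1)$ that $f^k\J g^k$ for all $k\geq 1$ and that $f,g$ are generalized conjugates. Since $f\J g$ in $T_n$ iff $\mathrm{rank}(f)=\mathrm{rank}(g)$, and since the weakly decreasing sequence $(\mathrm{rank}(f^i))_i$ takes values in $\{1,\dots,n\}$ and becomes constant once two consecutive terms agree, it is determined by its first $n$ values; hence the equalities $\mathrm{rank}(f^i)=\mathrm{rank}(g^i)$ for $i=1,\dots,n$ already yield them for all $i\geq 1$, giving $f^k\J g^k$ for all $k$. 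For generalized conjugacy I use the characterization recalled above (\cite[Exercise~7.10]{repbook}): $f,g$ are generalized conjugates iff $\mathrm{rank}(f^{\omega})=\mathrm{rank}(g^{\omega})$ and $f|_{\mathrm{Im}\,f^{\omega}}$ and $g|_{\mathrm{Im}\,g^{\omega}}$ have the same cycle structure. The second condition is exactly part of $(1)$, and having the same cycle structure forces the two permuted sets to have equal size, namely $\mathrm{rank}(f^{\omega})=\mathrm{rank}(g^{\omega})$; so $(1)$ delivers generalized conjugacy, and Theorem~\ref{t:main.thm} completes the cycle.

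The only genuinely delicate points are bookkeeping rather than conceptual: one must correctly match the matrix rank of $\rho(f)$ with the semigroup rank of $f$, identify the restriction of $\rho(f)$ to its eventual range as an honest permutation matrix so that Brauer's lemma applies, and observe that equality of cycle structures already encodes $\mathrm{rank}(f^{\omega})=\mathrm{rank}(g^{\omega})$. None of these require ideas beyond the machinery already assembled, so I expect the main work to be phrasing these identifications cleanly rather than overcoming any real obstacle.
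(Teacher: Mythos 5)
Your proposal is correct and follows essentially the same route as the paper: $(1)\Rightarrow(3)$ via Theorem~\ref{t:main.thm} together with the rank characterization of $\mathscr J$-equivalence and the stated criterion for generalized conjugacy, $(3)\Rightarrow(2)$ trivially, and $(2)\Rightarrow(1)$ by matching matrix rank with mapping rank and then applying Corollary~\ref{c:kovacs}, Proposition~\ref{p:fittingdecomp.finite} and Brauer's lemma to the restrictions to the eventual ranges. The only difference is that you spell out some bookkeeping (e.g.\ that the first $n$ ranks determine the whole rank sequence) that the paper leaves implicit.
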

\begin{proof}
Theorem~\ref{t:main.thm} shows that (1) implies (3).  Clearly, (3) implies (2). Since the rank of a mapping $h$ is the same as the rank of the matrix $\rho(h)$, if $\rho(f)$ is similar to $\rho(g)$, then $\mathrm{rank}(f^i)=\mathrm{rank}(g^i)$ for $i=1,\ldots, n$.  Notice that the matrix of $\rho(f)|_{\mathrm{Im}\ \rho(f^{\omega})}$ is the permutation matrix for the action of $f$ on $\mathrm{Im}\ f^{\omega}$, and similarly for $g$.  If $\rho(f)$ is similar to $\rho(g)$, then these two permutation matrices are similar by Corollary~\ref{c:kovacs} and Proposition~\ref{p:fittingdecomp.finite}.  So by Lemma~\ref{l:perm.matrix} we deduce that  $f|_{\mathrm{Im}\ f^{\omega}}$ has the same cycle structure as $g|_{\mathrm{Im}\ g^{\omega}}$.  This completes the proof.
\end{proof}

Note that linear conjugacy in $T_n$ is strictly between generalized conjugacy and conjugacy by an element of $S_n$. Condition (2) was the subject of  James Propp's mathoverflow question~\cite{Propp} that prompted this work.

\subsection{Symmetric inverse monoids}
The symmetric inverse monoid $I_n$ (also called the rook monoid~\cite{Solomonrook}) is the monoid of all partial injective mappings on $\{1,\ldots, n\}$.  The \emph{rank} of a partial injection is the size of its image (or, equivalently, domain).  The group of units of $I_n$ is the symmetric group $S_n$.  It is well known that two elements of $I_n$ are $\mathscr J$-equivalent if and only if they have the same rank.  Also, if $f\in I_n$, then $f$ acts as a permutation of $\mathrm{Im}\ f^{\omega}$ and it is well known that $f,g\in I_n$ are generalized conjugates if and only if $f^{\omega}$ and $g^{\omega}$ have the same rank and $f|_{\mathrm{Im}\ f^{\omega}}$ has the same cycle structure as $g|_{\mathrm{Im}\ g^{\omega}}$; see~\cite[Exercise~7.8]{repbook}.

\begin{Thm}
Two elements of $I_n$ are linear conjugates if and only if they are conjugate by an element of $S_n$.
\end{Thm}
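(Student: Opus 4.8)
The plan is to translate both notions into combinatorial data attached to the functional digraph of a partial injection and then match them. The functional digraph of $f\in I_n$—vertex set $\{1,\dots,n\}$ with an edge $i\to f(i)$ whenever $f(i)$ is defined—has every in- and out-degree at most $1$, so it is a disjoint union of directed cycles and directed strings (paths), where a string on $\ell$ vertices includes the degenerate case $\ell=1$ of an isolated vertex. Exactly as for $T_n$, two elements $f,g\in I_n$ are conjugate by an element of $S_n$ if and only if their functional digraphs are isomorphic, i.e.\ they have the same multiset of cycle lengths and the same multiset of string lengths; here a conjugating permutation is precisely a digraph isomorphism. The points lying on cycles are exactly $\mathrm{Im}\,f^{\omega}$, on which $f$ acts as the permutation recording the cycle lengths, while the string vertices are the transient points that eventually leave the domain.

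First I would dispatch the easy direction. If $g=\sigma f\sigma\inv$ with $\sigma\in S_n$, then $\sigma$ is an isomorphism of functional digraphs, so $f$ and $g$ have the same cycle structure on $\mathrm{Im}\,f^{\omega}$ and $\mathrm{Im}\,g^{\omega}$—hence are generalized conjugates by the criterion recalled above—and, since $\mathrm{rank}(h^k)=|\mathrm{dom}(h^k)|$ is simply the number of vertices from which one can follow $k$ edges in the digraph of $h$, a digraph invariant, we get $\mathrm{rank}(f^k)=\mathrm{rank}(g^k)$ for all $k$. Thus both hypotheses of Theorem~\ref{t:main.thm} hold and $f,g$ are linear conjugates.

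For the converse, suppose $f,g$ are linear conjugates. By Theorem~\ref{t:main.thm} we have $\mathrm{rank}(f^k)=\mathrm{rank}(g^k)$ for all $k\geq 1$ and $f,g$ are generalized conjugates; the latter already gives that $f|_{\mathrm{Im}\,f^{\omega}}$ and $g|_{\mathrm{Im}\,g^{\omega}}$ have the same cycle structure, so it remains only to match string lengths. The key computation is the formula
\[\mathrm{rank}(f^k)=C(f)+\sum_{\ell\geq 1} a_\ell(f)\max(\ell-k,0)\qquad(k\geq 1),\]
where $C(f)=|\mathrm{Im}\,f^{\omega}|$ is the number of cyclic points and $a_\ell(f)$ is the number of strings on $\ell$ vertices: a cyclic point survives every power, whereas on a string of $\ell$ vertices exactly $\max(\ell-k,0)$ of the vertices still lie in the domain of $f^k$. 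Taking second differences of the sequence $k\mapsto \mathrm{rank}(f^k)$ makes $C(f)$ cancel and recovers $a_\ell(f)$ for every $\ell\geq 2$, and the remaining count $a_1(f)$ of isolated points is then pinned down by $n=C(f)+\sum_{\ell\geq 1}\ell\,a_\ell(f)$. Since $\mathrm{rank}(f^k)=\mathrm{rank}(g^k)$ for all $k$ forces $C(f)=C(g)$ (the eventual value of the rank sequence) and $a_\ell(f)=a_\ell(g)$ for all $\ell$, the string-length multisets agree. Combined with the matching cycle lengths, the functional digraphs of $f$ and $g$ are isomorphic, so $f$ and $g$ are conjugate by an element of $S_n$.

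I expect the only real work to be the combinatorial extraction in the last paragraph. The one subtlety worth flagging is that isolated points (strings with $\ell=1$) are invisible to $\mathrm{rank}(f^k)$ for $k\geq 1$, so they cannot be read off from the rank sequence directly and must instead be recovered from the total count $n$; everything else is routine bookkeeping on the path–cycle decomposition of a partial injection.
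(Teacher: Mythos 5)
Your proof is correct and follows the same route as the paper: both directions reduce to Theorem~\ref{t:main.thm}, with the converse using the rank sequence together with the cycle structure on $\mathrm{Im}\,f^{\omega}$ to pin down the $S_n$-conjugacy class. The only difference is that the paper delegates the final combinatorial step (rank sequence plus cycle structure determines the path--cycle type of a partial injection) to a citation, whereas you prove it directly via the string-length extraction; your argument there is a correct, self-contained substitute.
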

\begin{proof}
Clearly, if $f,g$ are conjugate by an element of $S_n$, then they are  linear conjugates.  If $f,g$ are linear conjugates, then by Theorem~\ref{t:main.thm} we have that $\mathrm{rank}(f^i)=\mathrm{rank}(g^i)$ for all  $i\geq 1$.  We also have that $f,g$ are generalized conjugates, which means that  $f|_{\mathrm{Im}\ f^{\omega}}$ has the same cycle structure as $g|_{\mathrm{Im}\ g^{\omega}}$.  It then follows from~\cite[Theorem~3.19]{repbook}  that $f,g$ are conjugate by an element of $S_n$.
\end{proof}

\subsection{Full matrix monoids}
Next we consider the monoid $M_n(\mathbb F_q)$ of $n\times n$ matrices over the field of $q$ elements $\mathbb F_q$.

\begin{Thm}
Let $q$ be a prime power.  Then $A,B\in M_n(\mathbb F_q)$ are linear conjugates if and only if they are similar matrices, that is, conjugate by an element of $GL_n(\mathbb F_q)$.
\end{Thm}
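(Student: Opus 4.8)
The plan is to read off both notions from the syntactic criterion of Theorem~\ref{t:main.thm} and match it, term for term, against Corollary~\ref{c:kovacs} applied with $\Bbbk=\mathbb F_q$, since that corollary is exactly a criterion for similarity of matrices over $\mathbb F_q$. One direction is routine: if $A$ and $B$ are similar, say $B=PAP^{-1}$ with $P\in GL_n(\mathbb F_q)$, note that $GL_n(\mathbb F_q)$ is the group of units of the monoid $M_n(\mathbb F_q)$. For any representation $\rho$ of $M_n(\mathbb F_q)$ over $\mathbb C$, the idempotent $E=\rho(1)$ splits $V=EV\oplus(1-E)V$ with every $\rho(s)$ vanishing on $(1-E)V$ and $\rho(P)$ invertible on $EV$; hence $\rho(A)$ and $\rho(B)$ are conjugate, so $A$ and $B$ are linear conjugates.

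For the converse I would first record two standard structural facts about $M_n(\mathbb F_q)$. Its $\mathscr J$-classes are precisely the sets of matrices of a fixed rank (see \cite[Section~2.2, Exercise~6]{CP}), so $A^k\J B^k$ for all $k$ is equivalent to $\mathrm{rank}(A^k)=\mathrm{rank}(B^k)$ for all $k$, that is, to equality of rank sequences $\vec r(A)=\vec r(B)$ computed over $\mathbb F_q$. Second, the maximal subgroup at a rank-$r$ idempotent is $GL_r(\mathbb F_q)$. By Proposition~\ref{p:fittingdecomp.finite}, the idempotent $A^{\omega}$ is the Fitting projection onto $\mathop{\mathrm{im}^{\infty}} A=A^{\omega}\mathbb F_q^n$ along $\ker^{\infty} A$, and $A^{\omega+1}$ is the element of $G_{A^{\omega}}\cong GL_r(\mathbb F_q)$ acting as the invertible operator $A|_{\mathop{\mathrm{im}^{\infty}} A}$ on the eventual range and as $0$ on $\ker^{\infty} A$.

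The crux is to translate generalized conjugacy of $A$ and $B$ into the intertwining condition of Corollary~\ref{c:kovacs}. Given $x,x'$ witnessing generalized conjugacy, with $x'x=A^{\omega}$, $xx'=B^{\omega}$ and $xA^{\omega+1}x'=B^{\omega+1}$, the operator $x$ restricts to an $\mathbb F_q$-linear isomorphism $F=x|_{\mathop{\mathrm{im}^{\infty}} A}\colon \mathop{\mathrm{im}^{\infty}} A\to \mathop{\mathrm{im}^{\infty}} B$, and the sandwich relation unpacks to $FAv=BFv$ for all $v\in\mathop{\mathrm{im}^{\infty}} A$; conversely such an $F$, together with the equality of eventual ranks already forced by condition (1), produces the required $x,x'$. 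Thus conditions (1) and (2) of Theorem~\ref{t:main.thm} are exactly the hypotheses $\vec r(A)=\vec r(B)$ and ``there is an intertwining isomorphism of eventual ranges'' of Corollary~\ref{c:kovacs} over $\mathbb F_q$, which hold if and only if $A$ and $B$ are conjugate in $GL_n(\mathbb F_q)$.

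I expect the main obstacle to be the bookkeeping in this last translation, namely verifying carefully that the sandwich element $x$ of the generalized-conjugacy definition does restrict to a linear isomorphism of eventual ranges intertwining $A$ and $B$, equivalently that generalized conjugacy of $A$ and $B$ is precisely conjugacy of their invertible parts in the maximal subgroup $GL_r(\mathbb F_q)$. Once this identification is made, everything else is either the easy direction above or the two cited structural facts about rank and maximal subgroups in $M_n(\mathbb F_q)$, and the conclusion follows at once from Corollary~\ref{c:kovacs}.
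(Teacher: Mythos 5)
Your proof is correct, but it takes a more hands-on route than the paper. The paper's argument is essentially two lines: since $\mathbb C$-character equivalence implies both $\mathbb Q$- and $\mathbb F_q$-character equivalence, a pair of linear conjugates in $M_n(\mathbb F_q)$ satisfies all three conditions of Theorem~\ref{t:main.thm2} with $\Bbbk=\mathbb F_q$, hence is $\mathbb F_q$-linear conjugate, and the identity map is itself a representation of $M_n(\mathbb F_q)$ over $\mathbb F_q$. You share the crucial observation (the identity map as an $\mathbb F_q$-representation) but, instead of citing Theorem~\ref{t:main.thm2}, you re-run the relevant piece of its sufficiency proof by hand: conditions (1) and (2) of Theorem~\ref{t:main.thm} translate, via Proposition~\ref{p:fittingdecomp.finite}, into equality of rank sequences and an intertwining isomorphism $v\mapsto xv$ of eventual ranges, so Corollary~\ref{c:kovacs} applies directly. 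This works, and is in fact lighter than the general sufficiency argument, because generalized conjugacy supplies the exact sandwich $xA^{\omega+1}x'=B^{\omega+1}$ rather than only an equality of cyclic groups up to a Galois twist, so none of the $p$-regular/Galois analysis of Proposition~\ref{p:galois.act} is needed; the cost is redundancy, since everything you verify is already contained in the proof of Theorem~\ref{t:main.thm2}. A small bonus of your write-up is the care taken in the easy direction with the idempotent $\rho(1)$, which need not act as the identity matrix in an arbitrary representation of the monoid; the paper dismisses that direction as clear.
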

\begin{proof}
Clearly, if $A,B$ are similar, then they are linear conjugates.  On the other hand, if $A,B$ are linear conjugates, then since $\mathbb C$-character equivalence implies $\Bbbk$-character equivalence for any field $\Bbbk$,  (1) and (2) of Theorem~\ref{t:main.thm} are sufficient to guarantee $\mathbb F_q$-linear conjugacy by Theorem~\ref{t:main.thm2}.  Since the identity map is a representation of $M_n(\mathbb F_q)$ over $\mathbb F_q$, we deduce that $A,B$ are similar.
\end{proof}

\subsection{Groups and completely regular semigroups}

If $S$ is a completely regular semigroup (that is, $s=s^{\omega+1}$ for all $s\in S$), then condition (2)  of Theorem~\ref{t:main.thm2} implies condition (1) of the theorem  and hence $\Bbbk$-linear conjugacy is the same as $\mathbb Q$-character equivalence plus  $\Bbbk$-character equivalence for completely regular semigroups; this applies, in particular, to groups.  Note that $\Bbbk$-character equivalence for groups was first described, in general, by Berman~\cite{Berman}. Let us spell out the characterization of $\Bbbk$-linear conjugacy explicitly for finite groups.

\begin{Thm}
Let $G$ be a finite group and $\Bbbk$ a field.  Then $g,h\in G$ are $\Bbbk$-linear conjugates if and only if they generate conjugate cyclic subgroups and are $\Bbbk$-character equivalent.
\end{Thm}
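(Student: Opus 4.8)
The plan is to obtain this theorem as a direct specialization of Theorem~\ref{t:main.thm2} to the case $S = G$ a finite group, so the work consists entirely of translating the three conditions of that theorem into the two conditions in the present statement. The first observation I would make is that in a finite group every element is a unit, so the principal two-sided ideal $G^1 g G^1$ equals $G$ for every $g \in G$; hence $\J$ is the universal relation on $G$, and condition~(1) of Theorem~\ref{t:main.thm2} holds automatically for any $g,h$ and every $k \geq 1$. (Alternatively one could invoke the remark that condition~(2) implies condition~(1) in any completely regular semigroup.) This disposes of the ideal condition at once, so that $\Bbbk$-linear conjugacy in $G$ is governed entirely by conditions~(2) and~(3).

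The second step, which is the only one requiring any care, is to translate condition~(2)---$\mathbb{Q}$-character equivalence---into the assertion about cyclic subgroups. Here I would invoke the explicit description recalled just before Theorem~\ref{t:main.thm2}: $g,h$ are $\mathbb{Q}$-character equivalent if and only if there exist $x,x' \in G$ with $xx'x = x$, $x'xx' = x'$, $x'x = g^{\omega}$, $xx' = h^{\omega}$ and $x\langle g^{\omega+1}\rangle x' = \langle h^{\omega+1}\rangle$. In a group we have $g^{\omega} = 1 = h^{\omega}$ and $g^{\omega+1} = g$, $h^{\omega+1} = h$, so the relations $x'x = 1 = xx'$ force $x' = x^{-1}$, and the last condition collapses to $x\langle g\rangle x^{-1} = \langle h\rangle$. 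Thus $\mathbb{Q}$-character equivalence of $g$ and $h$ in $G$ is literally the statement that $\langle g\rangle$ and $\langle h\rangle$ are conjugate cyclic subgroups.

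The third step is immediate: condition~(3) is $\Bbbk$-character equivalence, which appears verbatim in the statement to be proved. Combining the three steps, Theorem~\ref{t:main.thm2} asserts that $g,h$ are $\Bbbk$-linear conjugates if and only if~(1) holds (automatic), (2) holds (conjugate cyclic subgroups), and~(3) holds ($\Bbbk$-character equivalent), which is exactly the claimed characterization.

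I do not expect a serious obstacle, as the result is a corollary of Theorem~\ref{t:main.thm2}; the only subtlety is in the second step, where one must use that the idempotents $g^{\omega}, h^{\omega}$ are both the identity of $G$, so that $x,x'$ are genuinely mutually inverse group elements and the partial-isomorphism data of generalized conjugacy degenerates into an honest conjugation. It is worth noting, as in the paragraph following Theorem~\ref{t:main.thm2}, that when $\Bbbk$ has characteristic zero or characteristic prime to $|G|$ the $\Bbbk$-character equivalence condition already forces the conjugate-cyclic-subgroup condition, so the statement simplifies in those cases; but over a general field both conditions are genuinely required.
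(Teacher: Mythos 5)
Your proposal is correct and follows exactly the route the paper intends: the theorem is stated as an immediate specialization of Theorem~\ref{t:main.thm2} to groups, where condition~(1) is automatic (the paper notes that (2) implies (1) for completely regular semigroups), condition~(2) collapses to conjugacy of the cyclic subgroups $\langle g\rangle$ and $\langle h\rangle$ via the degeneration $g^{\omega}=1$, $x'=x^{-1}$, and condition~(3) is carried over verbatim. The paper gives no separate proof, and your write-up supplies precisely the translation it leaves implicit.
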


In positive characteristic, $\Bbbk$-character equivalence is different than $\Bbbk$-linear conjugacy for groups, as is easily seen by considering  a non-trivial $p$-group over a field $\Bbbk$ of characteristic $p$.  Indeed, all elements of a finite $p$-group $G$ are $\Bbbk$-character equivalent over a field of characteristic $p$ since the only irreducible representation of $G$ is the trivial representation.  But no non-trivial element is $\Bbbk$-linear conjugate to the identity.

\def\malce{\mathbin{\hbox{$\bigcirc$\rlap{\kern-7.75pt\raise0,50pt\hbox{${\tt
  m}$}}}}}\def\cprime{$'$} \def\cprime{$'$} \def\cprime{$'$} \def\cprime{$'$}
  \def\cprime{$'$} \def\cprime{$'$} \def\cprime{$'$} \def\cprime{$'$}
  \def\cprime{$'$} \def\cprime{$'$}

\end{document}